\newtheorem{assumption}{Assumption}
\newtheorem{proposition}{Proposition}
\newcommand{\s}[1]{\{#1\}}
\newcommand{\N}{\mathcal N}
\newcommand{\M}{\mathcal M}
\newcommand{\K}{\mathcal K}
\newcommand{\fold}{(P_1)}
\newcommand{\fa}{$(P_2)$}
\newcommand{\fb}{$(CP_1)$}
\newcommand{\fc}{$(CP_2)$}
\newcolumntype{R}[1]{>{\raggedleft\arraybackslash}m{#1}}
\newcolumntype{L}[1]{>{\raggedright\arraybackslash}m{#1}}
\newcolumntype{M}[1]{>{\centering\arraybackslash}m{#1}}
\newcolumntype{H}{>{\setbox0=\hbox\bgroup}c<{\egroup}@{}}
\let\oldnl\nl
\newcommand{\nonl}{\renewcommand{\nl}{\let\nl\oldnl}}
\begin{document}

\begin{center}
  \begin{Large}
    \textbf{Compact MILP formulations for the $p$-center problem}
  \end{Large}
\vspace{.6cm}

  {Zacharie Ales$^1$, Sourour Elloumi$^1$}\vspace{.3cm}


    $^1$: ENSTA-ParisTech / UMA, 91762 Palaiseau, France\\
    Laboratoire CEDRIC, Paris, France\\
    \texttt{\{zacharie.ales, sourour.elloumi\}@ensta-paristech.fr}

\end{center}
\thispagestyle{empty}
\vspace{.3cm}

\textbf{Keywords: }{$p$-center, discrete location, equivalent formulations,
  integer programming.}
\vspace{.3cm}

\begin{center}
  \begin{minipage}{.7\linewidth}
    \textbf{Abstract.}  The $p$-center  problem consists  in selecting
    $p$ centers among $M$ to cover  $N$ clients, such that the maximal
    distance  between a  client  and its  closest  selected center  is
    minimized. For this problem we propose two new and compact integer
    formulations.

    Our   first  formulation   is   an  improvement   of  a   previous
    formulation. It significantly decreases  the number of constraints
    while preserving the optimal value  of the linear relaxation.  Our
    second formulation contains less  variables and constraints but it
    has a weaker linear relaxation bound.

    We  besides introduce  an algorithm  which enables  us to  compute
    strong   bounds  and   significantly  reduce   the  size   of  our
    formulations.

    Finally,  the  efficiency  of   the  algorithm  and  the  proposed
    formulations  are  compared in  terms  of  quality of  the  linear
    relaxation and computation time over instances from OR-Library.
  \end{minipage}
\end{center}

\section{Introduction}

We consider $N$ clients $\s{C_1, ..., C_N}$ and $M$ potential facility
sites $\s{F_1, ..., F_M}$. Let  $d_{ij}$ be the distance between $C_i$
and $F_j$.  The  objective of the $p$-center problem is  to open up to
$p$ facilities such that the maximal distance (called \textit{radius})
between a client and its closest selected site is minimized.

This problem  is very  popular in  combinatorial optimization  and has
many   applications.    We   refer   the   reader    to   the   recent
survey~\cite{Calik2015}.  Very recent publications
include~\cite{ferone2017b,ferone2017} which   provide   heuristic
solutions and~\cite{calik2013double} on an exact solution method.

In  this  paper, we  will  focus  on mixed-integer  linear  programming
formulations of the $p$-center problem. 

Let $\mathcal M$ and $\mathcal N$ respectively be the sets $\s{1, ..., M}$ and
$\s{1, ..., N}$. The most classical formulation, denoted by $(P_1)$, for the $p$-center problem (see for
example~\cite{daskin1995network}) considers the following variables:
\begin{itemize}
\item $y_j$ is a binary variable equal to $1$ if and only if $F_j$ is open;
\item $x_{ij}$ is a binary variable equal  to $1$ if and only if $C_i$
  is assigned to $F_j$;
\item $R$ is the radius.
\end{itemize}
\renewcommand{\arraystretch}{1.4}
\begin{center}
    \begin{subequations}
      \begin{empheq}[left=\fold\empheqlbrace]{align}
         \min &~R&\\
        \mbox{s.t.} & \displaystyle\sum_{j = 1}^M y_j\leq p & \label{eq:pc1}\\
        & \displaystyle\sum_{j = 1}^M x_{ij} = 1 & \label{eq:pc2} i\in \N  \\
        & x_{ij}\leq y_j & \label{eq:pc3}i\in \N, j\in\M \\
        & \displaystyle\sum_{j=1}^M d_{ij}~ x_{ij} \leq R & \label{eq:pc4}  i\in \N\\
        & x_{ij},y_j\in\s{0, 1} & i\in\N, j\in \M \notag\\
         & r\in\mathbb R\notag
      \end{empheq}
\label{eq:pc}
    \end{subequations}
       \end{center}

Constraint~\eqref{eq:pc1} ensures  that no  more than $p$  facilities are
opened.  Each client  is assigned  to exactly  one facility  through
Constraints~\eqref{eq:pc2}. Constraints~\eqref{eq:pc3} link variables
  $x_{ij}$ and $y_j$ while~\eqref{eq:pc4} ensure the coherence of the
  objective.

A more recent formulation, denoted by \fa, was proposed
in~\cite{elloumi2004new}. Let  $D^0 < D^1 < ... <
D^K$ be the different $d_{ij}$ values$~\forall i\in\mathcal N~\forall
j\in\mathcal M$. Note that, if many distances $d_{ij}$ have the
same   value,   $K$  may   be   significantly   lower  than   $M\times
N$. Let $\mathcal K$ be the set $\s{1, ..., K}$. Formulation~\fa~is based on the variables $y_j$, previously
introduced, and one binary variable $z^k$, for each $k\in\K$, equals to $1$ if
and only if the optimal radius is greater than or equal to $D^k$:

\begin{center}
  \begin{subequations}
    \begin{empheq}[left=(P_2)\empheqlbrace]{align}
       \min& ~D^0 + \sum_{k = 1}^K (D^k - D^{k-1})~z^k\\
      \mbox{s.t.} & ~1\leq\displaystyle\sum_{j=1}^M y_j \leq p& \label{eq:pcsc1}\\
       &   z^k  +   \sum_{j\,:\,d_{ij}<D^k}   y_j  \geq   1  &   
       i\in\N, k\in\K \label{eq:pcsc3}\\
      & y_j,z^k\in\s{0, 1} & j\in\M, k\in\K\notag
    \end{empheq}
  \end{subequations}
\end{center}

Constraints~\eqref{eq:pcsc3}  ensure that  if no  facility located  at
less than $D^k$ of client $C_i$ is selected, then the radius must be greater than or
equal to $D^k$.

This formulation has been proved to be tighter than $\fold$~\cite{elloumi2004new}. However, its size strongly depends on the value $K$
(\textit{i.e.,} the number of distinct distances $d_{ij}$).

It also has recently been  adapted to the $p$-dispersion problem which
consists in selecting  $p$ facilities among $N$ such  that the minimal
distance      between       two      selected       facilities      is
maximized~\cite{sayah2017new}.

A last formulation, that can be deduced  from $(P_2)$ by a change of variables, has been recently
introduced~\cite{calik2013double} and named $(P_4)$. It contains, for all $k\in\K$, a
binary variable $u_k$ equal to $1$ if and only if the optimal radius is
$D^k$ (i.e., $u_k= z^k-z^{k+1}$ and $z^k=\sum_{q=k}^K u_q$):

\begin{center}
  \begin{subequations}
    \begin{empheq}[left=(P_4)\empheqlbrace]{align}
       \min& \sum_{k = 1}^K D^ku_k\\
      \mbox{s.t.} & ~\eqref{eq:pcsc1}\notag\\
       & \sum_{j\,:\,d_{ij}\leq D^k} y_j \geq \sum_{q=1}^k u_q & 
       i\in\N, k\in\K \label{eq:pcru3}\\
       & \sum_{k=1}^K u_k = 1 & \label{eq:pcru4}\\
      & y_j,u_k\in\s{0, 1} & j\in\M, k\in\K\notag
    \end{empheq}
  \end{subequations}
\end{center}


They also  proposed a  weaker version  of this  formulation, called
$(P_3)$,    obtained   by    replacing    the    left-hand   side    of
constraints~\eqref{eq:pcru3} by $u_k$. They proved that
 $(P_4)$ leads to the same linear relaxation bound and has the same size as $(P_2)$.
\bigskip

The rest of the paper is organized as follows. Section~\ref{sec:ourformulations} presents our two new formulations.  In
Section~\ref{sec:iterative}     we     introduce    an    algorithm.   Finally,  Section~\ref{sec:results}   describes  numerical
results on instances from the OR-Library.

\section{Our new formulations}
\label{sec:ourformulations}
\subsection{Formulation \fb}

In \fa, for all $k\in\K$, variable $z^k$ is equal to $1$ if and only if the
optimal radius is greater than or equal to $D^k$.  As a consequence, the
following constraints are valid
\begin{equation}
  \label{eq:zordered}
  z^k\geq z^{k+1}\quad k\in\s{1, ..., K-1}.
\end{equation}

We first show that these inequalities are redundant for $(P_2)$. Let $(P_2')$ be the formulation obtained when
contraints~\eqref{eq:zordered}   are  added   to   $(P_2)$  and   let
$v(\overline{F})$ be the optimal value of
the linear  relaxation of a given  formulation $F$. We now  prove that
adding constraints~\eqref{eq:zordered}  does not improve the  quality of
the linear relaxation.

\begin{proposition}
$v(\overline{P_2'}) = v(\overline{P_2})$
\end{proposition}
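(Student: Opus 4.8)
The plan is to establish the two inequalities separately. The inequality $v(\overline{P_2'}) \ge v(\overline{P_2})$ is immediate, since $(P_2')$ is obtained from $(P_2)$ by \emph{adding} the constraints~\eqref{eq:zordered}; the feasible set of $\overline{P_2'}$ is thus contained in that of $\overline{P_2}$, and the same objective is minimized over a smaller set. Hence the whole argument reduces to proving $v(\overline{P_2'}) \le v(\overline{P_2})$, which I would do by taking an arbitrary optimal solution $(y,z)$ of $\overline{P_2}$ and constructing from it a solution feasible for $\overline{P_2'}$ whose cost is no larger.

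The natural first attempt — making $z$ non-increasing by \emph{raising} its entries, say $z^k \mapsto \max_{q\ge k} z^q$ — keeps~\eqref{eq:pcsc3} satisfied but \emph{worsens} the objective, because every coefficient $D^k - D^{k-1}$ is strictly positive; identifying the right repair is the main (minor) obstacle here. The fix is to move in the opposite direction: set $\bar y = y$ and $\bar z^k := \min_{q\le k} z^q$ for $k\in\K$. Then $\bar z$ is non-increasing, so~\eqref{eq:zordered} holds, and $0 \le \bar z^k \le z^k \le 1$ for all $k\in\K$.

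To finish I would check that $(\bar y,\bar z)$ still satisfies~\eqref{eq:pcsc3}: fixing $i\in\N$, $k\in\K$ and choosing $q^\ast \le k$ with $\bar z^k = z^{q^\ast}$, one has $\{j : d_{ij} < D^{q^\ast}\} \subseteq \{j : d_{ij} < D^k\}$ since $D^{q^\ast}\le D^k$, whence
\[
  \bar z^k + \sum_{j\,:\,d_{ij}<D^k} y_j \;\ge\; z^{q^\ast} + \sum_{j\,:\,d_{ij}<D^{q^\ast}} y_j \;\ge\; 1,
\]
using $y\ge 0$ and the constraint~\eqref{eq:pcsc3} of $\overline{P_2}$ written for $(i,q^\ast)$; constraint~\eqref{eq:pcsc1} is unchanged since $\bar y=y$. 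Because $\bar z^k\le z^k$ and all objective coefficients $D^k-D^{k-1}$ are positive, the cost of $(\bar y,\bar z)$ is at most that of $(y,z)$, i.e. at most $v(\overline{P_2})$. This yields $v(\overline{P_2'}) \le v(\overline{P_2})$, and combined with the reverse inequality it gives the claimed equality. The only structural fact used is the nestedness of the index sets $\{j : d_{ij}<D^k\}$ in $k$.
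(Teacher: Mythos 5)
Your proof is correct, and it takes a genuinely different route from the paper's. The paper argues about an \emph{optimal} solution of $\overline{P_2}$: by optimality, for each $k$ with $\tilde z^k>0$ some constraint~\eqref{eq:pcsc3} must be tight (otherwise $\tilde z^k$ could be decreased), and from that tight constraint a violation of~\eqref{eq:pcsc3} at $(i(k),k-1)$ is derived whenever $\tilde z^{k-1}<\tilde z^{k}$; so the ordering inequalities~\eqref{eq:zordered} are automatically satisfied at optimality. You instead give a direct monotonization: map any feasible $(y,z)$ of $\overline{P_2}$ to $(y,\bar z)$ with $\bar z^k=\min_{q\le k}z^q$, check feasibility for $\overline{P_2'}$ via the nestedness of the sets $\{j: d_{ij}<D^k\}$, and use positivity of the coefficients $D^k-D^{k-1}$ to conclude the cost does not increase. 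Both arguments hinge on the same structural fact (nestedness of the coverage sets in $k$), but yours is constructive, applies to every feasible point rather than only optimal ones, and sidesteps a small subtlety in the paper's tightness argument (when $\tilde z^k=0$ no tight constraint need exist, though in that case~\eqref{eq:zordered} holds trivially). The paper's version buys a slightly stronger-sounding statement — that optimal solutions of the relaxation already satisfy~\eqref{eq:zordered} — which is the form reused informally later; your version is the cleaner way to get the equality of LP values itself.
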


\begin{proof}
  
We show that an optimal solution $(\tilde y,
\tilde z)$ of the relaxation of \fa~satisfies~\eqref{eq:zordered}.  For  each
distance $D^k$ there exists a client $i(k)$ such that 
\begin{equation}
\tilde z^{k}+ \sum_{j~:~d_{i(k)j}< D^{k}} \tilde y_j = 1
\label{eq:p1}
\end{equation}

otherwise $\tilde z^{k}$ can be decreased and $(\tilde y, \tilde z)$ is not
optimal.

We now assume that $\tilde z^{k-1} < \tilde z^{k}$ for some index $k\in\s{2, ...,
  K}$. It follows that
\begin{equation}
    {\tilde z^{k-1}}+  {\sum_{j~:~d_{i({k})j}< D^{k-1}} \tilde  y_j} <
    {\tilde z^{k}}+
    {\sum_{j~:~d_{i({k})j}< D^{k}} \tilde y_j}=1\nonumber
\end{equation}

The    last    equality   follows    from~\eqref{eq:p1}.    Therefore,
constraints~\eqref{eq:pcsc3} for $i(k)$ and $k-1$ is violated.
\end{proof}

We  now prove  that  a large  part  of constraints~\eqref{eq:pcsc3}  are
redundant in $(P_2')$.


Let $N_i^k$ be  the set of facilities located at  less than $D^k$ from
client $C_i$. We can observe  that $N_i^{k}$ is included in $N_i^{k+1}$,
for all $k\in\K$.  Moreover, $N_i^{k}$ is equal to  $N_i^{k+1}$ if and
only if there is no facility  at distance $D^{k}$ from client $C_i$. Let
$S_i$ be the set of indices $k\in\s{1, ..., K-1}$ such that $N_i^k$ is
different from $N_i^{k+1}$. Observe that $|S_i|\leq \min(M,K)$.
\bigskip

We define Formulation $(CP_1)$
as      Formulation     $(P_2')$      where      only     the
constraints~\eqref{eq:pcsc3} such that $k\in
S_i$ or $k=K$ are kept.

\begin{center}
  \begin{subequations}
    \begin{small}
      \begin{empheq}[left=(CP_1)\empheqlbrace]{align}
        \min &~D^0 + \sum_{k = 1}^K (D^k - D^{k-1})~z^k\\
        \mbox{s.t.}  &~\eqref{eq:pcsc1}, \eqref{eq:zordered}\notag& \\
&         z^k + \sum_{j\,:\,d_{ij}<D^k} y_j \geq 1 & i\in\N,~k\in
        S_{i}\cup\s{K} \label{eq:pcsco3}\\ 
& y_j,z^k\in\s{0, 1} &
        j\in\M, k\in\K\notag
      \end{empheq}
    \end{small}

  \end{subequations}
\end{center}

The   number  of   constraints   is  dominated   by   the  number   of
constraints~\eqref{eq:pcsco3}. This number is bounded by both $NM$ and $NK$. 

The following proposition proves that $(CP_1)$ is a valid formulation.

\begin{proposition}
  $(CP_1)$ is a valid formulation of the $p$-center problem.
\label{pr:pcscovalid}
\end{proposition}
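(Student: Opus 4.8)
The plan is to show that $(CP_1)$ has exactly the same set of feasible integer solutions as $(P_2)$ (equivalently $(P_2')$), since $(P_2)$ is already known to be a valid formulation of the $p$-center problem. Because $(CP_1)$ is obtained from $(P_2')$ by deleting constraints, every feasible solution of $(P_2')$ is feasible for $(CP_1)$; the only thing to prove is the converse, namely that any binary $(y,z)$ satisfying \eqref{eq:pcsc1}, \eqref{eq:zordered} and the retained constraints \eqref{eq:pcsco3} (for $k\in S_i\cup\{K\}$) in fact satisfies \emph{all} constraints \eqref{eq:pcsc3}, i.e. $z^k+\sum_{j:d_{ij}<D^k}y_j\ge 1$ for every $i\in\N$ and every $k\in\K$.

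First I would fix a client $i$ and an arbitrary index $k\in\K$ that is \emph{not} in $S_i$ and not equal to $K$. By definition of $S_i$, $N_i^k=N_i^{k+1}$, and more generally $N_i^k=N_i^{k'}$ for the smallest index $k'\ge k$ with $k'\in S_i\cup\{K\}$ (such a $k'$ exists because $K\in S_i\cup\{K\}$); indeed, between consecutive elements of $S_i$ the sets $N_i^{\cdot}$ are constant. Hence $\sum_{j:d_{ij}<D^k}y_j=\sum_{j:d_{ij}<D^{k'}}y_j$. The retained constraint \eqref{eq:pcsco3} for client $i$ and index $k'$ gives $z^{k'}+\sum_{j:d_{ij}<D^{k'}}y_j\ge 1$. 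Since $k\le k'$, the ordering constraints \eqref{eq:zordered} yield $z^k\ge z^{k'}$. Combining these, $z^k+\sum_{j:d_{ij}<D^k}y_j\ge z^{k'}+\sum_{j:d_{ij}<D^{k'}}y_j\ge 1$, which is exactly the missing constraint \eqref{eq:pcsc3} for $(i,k)$.

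This establishes that the feasible regions of $(CP_1)$ and $(P_2')$ coincide over binary variables, and since $(P_2')$ has the same integer feasible set as $(P_2)$ (the constraints \eqref{eq:zordered} are valid, as noted before Proposition~1), $(CP_1)$ is a valid formulation. I would also remark that the objective functions are identical, so optimal values match as well. The only real subtlety — the ``main obstacle'' — is the bookkeeping: one must argue carefully that for any $k\notin S_i$ there is a well-defined next index $k'\in S_i\cup\{K\}$ with $k'\ge k$ and that $N_i^k$ is constant on the whole block $\{k,k+1,\dots,k'\}$; this follows immediately from the definition of $S_i$ as the set of indices where $N_i^{\cdot}$ jumps, together with the monotonicity $N_i^k\subseteq N_i^{k+1}$, but it is the one place where precision is needed. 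Everything else is a one-line inequality chain using \eqref{eq:zordered} and the retained constraint.
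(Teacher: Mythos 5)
Your proof is correct and follows essentially the same approach as the paper: the removed constraints \eqref{eq:pcsc3} are dominated by retained ones, using $N_i^k=N_i^{k'}$ together with the ordering constraints \eqref{eq:zordered}. In fact you are slightly more careful than the paper, which only exhibits the one-step domination of index $k$ by $k+1$ and leaves the chaining up to the next retained index implicit.
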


\begin{proof}
  We   show  that   the   constraints  removed   from  $(P_2')$   are
  dominated.  If  $N_i^k=N_i^{k+1}$,  then  $\sum_{j\,:\,d_{ij}<D^{k}}
  y_j=\sum_{j\,:\,d_{ij}<D^{k+1}}  y_j$.   Since $z^{k}\geq  z^{k+1}$,
  we have:
\[   z^{k}   +   \sum_{j\,:\,d_{ij}<D^{k}}  y_j   \geq   z^{k+1}   +
\sum_{j\,:\,d_{ij}<D^{k+1}} y_j\geq 1.\]

As a consequence, the  constraint~\eqref{eq:pcsc3} associated with $i$
and $k$ is dominated by the one associated with $i$ and $k+1$.
\end{proof}


We now prove that Formulations $(P_2)$ and $(CP_1)$ lead to the same
bound by linear relaxation.

\begin{proposition}
$v(\overline{CP_1})=v(\overline{P_2})$.
\end{proposition}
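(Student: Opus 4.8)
The plan is to show two inequalities. Since $(CP_1)$ is obtained from $(P_2')$ by deleting constraints, and by the previous proposition $v(\overline{P_2'})=v(\overline{P_2})$, we immediately get $v(\overline{CP_1})\le v(\overline{P_2})$; indeed every feasible point of $\overline{P_2'}$ is feasible for $\overline{CP_1}$, so the minimum can only drop. The real content is the reverse inequality $v(\overline{CP_1})\ge v(\overline{P_2})$, for which I would take an optimal solution $(\hat y,\hat z)$ of $\overline{CP_1}$ and show it can be turned into a feasible solution of $\overline{P_2}$ (equivalently $\overline{P_2'}$) with the same objective value.

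First I would argue that $(\hat y,\hat z)$ already satisfies \emph{all} of Constraints~\eqref{eq:pcsc3}, not just those with $k\in S_i\cup\{K\}$. Fix a client $i$ and an index $k\notin S_i$. By definition of $S_i$, $N_i^k=N_i^{k+1}=\dots=N_i^{k'}$ where $k'$ is the smallest index $\ge k$ with $k'\in S_i\cup\{K\}$; in particular $\sum_{j:d_{ij}<D^k}\hat y_j=\sum_{j:d_{ij}<D^{k'}}\hat y_j$. Since $(\hat y,\hat z)$ satisfies~\eqref{eq:zordered} we have $\hat z^k\ge \hat z^{k'}$, and since it satisfies~\eqref{eq:pcsco3} for the pair $(i,k')$ we get $\hat z^{k'}+\sum_{j:d_{ij}<D^{k'}}\hat y_j\ge 1$. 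Chaining these, $\hat z^k+\sum_{j:d_{ij}<D^k}\hat y_j\ge \hat z^{k'}+\sum_{j:d_{ij}<D^{k'}}\hat y_j\ge 1$, which is exactly~\eqref{eq:pcsc3} for $(i,k)$. This is essentially the same domination argument used in the proof of Proposition~\ref{pr:pcscovalid}, now applied at the level of the linear relaxation.

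Consequently $(\hat y,\hat z)$ is feasible for $\overline{P_2'}$ (it satisfies~\eqref{eq:pcsc1}, all of~\eqref{eq:pcsc3}, \eqref{eq:zordered}, and the box constraints), and since the two formulations share the same objective function, it is a feasible solution of $\overline{P_2'}$ with value $v(\overline{CP_1})$. Hence $v(\overline{P_2'})\le v(\overline{CP_1})$, and combined with $v(\overline{P_2'})=v(\overline{P_2})$ from the first proposition this gives $v(\overline{P_2})\le v(\overline{CP_1})$. Together with the easy direction above we conclude $v(\overline{CP_1})=v(\overline{P_2})$.

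I do not anticipate a genuine obstacle here: the argument is a routine ``deleting dominated constraints does not change the relaxation value'' statement, and the only thing to be careful about is the bookkeeping that lets one pass from a kept constraint at index $k'$ back to a deleted one at index $k<k'$ using the monotonicity of the sets $N_i^k$ together with~\eqref{eq:zordered}. The mild subtlety worth stating explicitly is why the constraint at $k'=K$ (always kept) suffices to cover the ``tail'' indices $k$ with $k<K$ and $k\notin S_i$, but this is handled by the same chaining since $N_i^k=N_i^K$ for all such $k$.
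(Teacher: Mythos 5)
Your proof is correct and follows essentially the same route as the paper, which simply invokes the domination argument from Proposition~\ref{pr:pcscovalid} (chaining $N_i^k=N_i^{k'}$ with constraints~\eqref{eq:zordered} to recover the deleted constraints~\eqref{eq:pcsc3}, an argument that applies verbatim to fractional solutions) together with $v(\overline{P_2'})=v(\overline{P_2})$. Your version merely spells out the multi-step chaining to the next kept index $k'$ that the paper leaves implicit.
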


\begin{proof}
The arguments  used in  the proof of  Proposition~\ref{pr:pcscovalid} can
 be used again to show that  the constraints removed from $(P_2')$ do
 not impact the value of the linear relaxation.


\end{proof}

To sum up, $(CP_1)$  is a valid formulation that has  the same LP bound
as $(P_2)$.  However, as detailed in Table~\ref{tab:size}, Formulation $(CP_1)$ is much smaller since it 
reduces the number of constraints by a factor of up to~$N$.

\subsection{Formulation \fc}

We now introduce a second  formulation, denoted by \fc, which contains
less variables and constraints than $(CP_1)$.

We replace the $K$
binary variable $z^k$ with a unique general integer variable $r$ which
represents the index of a radius:

\begin{center}
  \begin{subequations}
    \begin{empheq}[left=(CP_2)\empheqlbrace]{align}
       \min& ~r\notag\\
      \mbox{s.t.} & ~\eqref{eq:pcsc1}\notag& \\
       &   r  + k  \sum_{j\,:\,d_{ij}<D^k}   y_j  \geq  k  &   
       i\in\N, k\in S_i\cup\s{K}\label{eq:pcr3}\\
      & y_j\in\s{0, 1} & j\in\M\notag\\& r\in\s{0, ..., K}
\notag    \end{empheq}
  \end{subequations}
\end{center}

Constraints~\eqref{eq:pcr3} play a similar  role to
Constraints~\eqref{eq:pcsco3}.


Formulation $(CP_2)$ does not directly provide the value of the optimal
radius $R$ but its index $r$ such that $D^r=R$. We now prove that Formulation $(CP_2)$ is valid.

\begin{proposition}
 $(CP_2)$ is a valid formulation of the $p$-center problem.
\end{proposition}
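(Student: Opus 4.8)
The plan is to establish a bijection between feasible solutions of $(CP_2)$ and feasible solutions of $(CP_1)$ that preserves the objective value, and then invoke the already-proven validity of $(CP_1)$ (Proposition~\ref{pr:pcscovalid}). The natural correspondence is the change of variables $r = \sum_{k=1}^K z^k$, equivalently $z^k = 1$ iff $r \geq k$; the $y_j$ variables are shared and unchanged. Under this correspondence the objective $D^0 + \sum_{k=1}^K (D^k - D^{k-1}) z^k$ telescopes to $D^r$, so if I define the reported radius of a $(CP_2)$-solution to be $D^r$, the objectives match. Hence it suffices to show that the projection onto $y$ of the feasible regions coincide, i.e.\ that a given $\bar y \in \{0,1\}^M$ satisfying~\eqref{eq:pcsc1} extends to a feasible solution of $(CP_1)$ exactly when it extends to a feasible solution of $(CP_2)$, and moreover that the minimal feasible objective value is the same in both cases.

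First I would fix such a $\bar y$ and, for each client $i$, let $k^*(i)$ be the smallest index $k \in S_i \cup \{K\}$ for which $\sum_{j : d_{ij} < D^k} \bar y_j = 0$ — that is, the smallest breakpoint below which no open facility lies — setting $k^*(i) = 0$ if some open facility is within distance $D^1$ of $C_i$ in the relevant sense. The best (smallest) radius index achievable with this $\bar y$ is then $r^*= \max_i k^*(i)$. I would check that $r = r^*$ together with $\bar y$ satisfies all constraints~\eqref{eq:pcr3}: for $k \leq k^*(i)$ with $k\in S_i\cup\{K\}$ the inner sum may be zero but $r^* \geq k^*(i)\geq k$; for $k > k^*(i)$ the sum $\sum_{j:d_{ij}<D^k}\bar y_j$ is at least $1$, so $r + k\cdot(\text{sum}) \geq k$ holds trivially since $r\geq 0$. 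Conversely any feasible $r$ must satisfy $r \geq k$ whenever the inner sum vanishes, so $r \geq r^*$; thus $r^*$ is exactly the optimal radius index for this $\bar y$, and it matches the value $\sum_k z^k$ of the analogous optimal extension in $(CP_1)$, where $z^k = 1 \iff k \leq r^*$.

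Combining the two directions, the optimal value of $(CP_2)$ equals the optimal value of $(CP_1)$ (after translating $r$ to $D^r$), which by Proposition~\ref{pr:pcscovalid} equals the optimal $p$-center radius; and an optimal $(CP_2)$-solution yields an optimal set of open facilities via $\{j : \bar y_j = 1\}$, assigning each client to its nearest open facility. I would also note the one subtlety that needs a line of care: the coefficient $k$ multiplying the sum in~\eqref{eq:pcr3} must be large enough that a single open facility within range makes the constraint vacuous — and indeed $r \le K$ forces $r + k \cdot 1 \ge k$ always, which is why the multiplier $k$ (rather than $1$) is used. The main obstacle is not any deep argument but getting the indexing of $S_i$, the strict inequality $d_{ij} < D^k$, and the boundary cases $k=K$ and "no facility in range" all consistent; once the correspondence $z^k \leftrightarrow [r \ge k]$ is set up cleanly, validity transfers directly from $(CP_1)$.
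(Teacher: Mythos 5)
Your proof is correct and follows essentially the same route as the paper: both rely on the correspondence $r=\sum_{k=1}^K z^k$ (equivalently $z^k=1$ iff $r\ge k$) between $(CP_1)$ and $(CP_2)$, check the constraints in both directions, note that the $(CP_1)$ objective telescopes to $D^r$, and transfer validity from $(CP_1)$. One wording slip: $k^*(i)$ must be the \emph{largest} index $k\in S_i\cup\{K\}$ with $\sum_{j:d_{ij}<D^k}\bar y_j=0$ (i.e.\ the index of the distance from $C_i$ to its nearest open facility), not the smallest --- your subsequent verification already uses it in that sense, so this is cosmetic rather than a gap.
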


\begin{proof}
  Let $(\tilde y, \tilde z)$ be an integer solution of $(CP_1)$. We first
  show that there exists an  integer solution $(\overline y, \overline r)$
  of $(CP_2)$  which provides the  same radius  by setting $\overline  y =
  \tilde y$ and $ \overline r = \sum_{k= 1}^K \tilde z^k$. We need to
  prove that constraints~\eqref{eq:pcr3} are satisfied. We know that 
  \begin{equation}
\tilde z^k + \sum_{j\,:\,d_{ij}<D^k} \tilde y_j \geq 1
\nonumber
\end{equation}

is satisfied for any client $C_{i}$ and any distance $D^{k}$.

If    $\tilde   z^k$    is   equal    to   $0$,    the   corresponding
Constraint~\eqref{eq:pcr3}  is  satisfied, as  $\sum_{j\,:\,d_{ij}<D^k}
\tilde y_j \geq 1$.  Otherwise, the  same result is obtained since the
$\tilde z^k$ variables are ordered in decreasing order which leads to $\overline
r\geq k$. These two solutions provide the same  radius as $D^0 + \sum_{k  = 1}^K (D^k -
D^{k-1})~\tilde z^k=D^{\sum_{k=1}^K\tilde z^k}$.

We now prove that for any solution $(\tilde y, \tilde r)$ of $(CP_2)$ there exists
an  equivalent solution  $(\overline y,  \overline  z)$ of  $(CP_1)$. We  set
$\overline y =\tilde y$ and $\overline z^k=1$ if and only if $\tilde r\geq
k$. Constraint
\begin{equation}
\tilde r + k \sum_{j\,:\,d_{ij}<D^k} \tilde y_j \geq k
\label{eq:proofpcrvalid}
\end{equation}

is satisfied for any $k\in\K$. If $\tilde r$ is lower than $k$, then at least
one variable $\tilde y_j$ from equation~\eqref{eq:proofpcrvalid} is
equal  to $1$  and the  corresponding constraint~\eqref{eq:pcsco3}  is
satisfied. Otherwise, $\overline z^k$ is equal to $1$ and the same conclusion is
reached.
\end{proof}

We now prove that the linear relaxation of $(CP_1)$ is stronger than  the one of $(CP_2)$.

\begin{assumption}
  We shall suppose $D^0=0$ and $\forall k\in\K,~D^k-D^{k-1}=1$.
\label{ass:rank}
\end{assumption}

This assumption is not restrictive,  one can transform any instance by
replacing any distance $D^k$ by  its rank $k$. The transformed problem
is equivalent as if the optimal radius is $D^{k^*}$, then
the optimal solution of the transformed problem is $k^*$.

Under  this assumption,  problems $(CP_1)$  and $(CP_2)$  have the  same
optimal values, both of them compute the rank of the optimal radius.

\begin{proposition}
  Let $\overline{CP_1}$ and $\overline{CP_2}$ respectively be the LP relaxation of $(CP_1)$ and $(CP_2)$, $v(\overline{CP_1})\geq v(\overline{CP_2})$ under Assumption~\ref{ass:rank}.
\end{proposition}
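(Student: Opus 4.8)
The goal is to show $v(\overline{CP_1}) \ge v(\overline{CP_2})$, i.e. that the LP relaxation of $(CP_1)$ gives a value at least as large as that of $(CP_2)$. The natural strategy is: take an arbitrary feasible point $(\tilde y, \tilde z)$ of $\overline{CP_1}$ and construct from it a feasible point $(\overline y, \overline r)$ of $\overline{CP_2}$ with objective value no larger. Since the objective of $(CP_1)$ is $\sum_{k=1}^K \tilde z^k$ (using Assumption~\ref{ass:rank}, where $D^0=0$ and $D^k-D^{k-1}=1$) and the objective of $(CP_2)$ is $\overline r$, the obvious candidate is $\overline y = \tilde y$ and $\overline r = \sum_{k=1}^K \tilde z^k$, mirroring the transformation already used in the validity proof for $(CP_2)$. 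Then the objectives are equal by construction, so the whole burden is to verify that $(\overline y, \overline r)$ satisfies the LP constraints of $(CP_2)$ — constraint~\eqref{eq:pcsc1} is immediate since $\overline y = \tilde y$, so only constraints~\eqref{eq:pcr3} need attention.

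So fix a client $i\in\N$ and an index $k\in S_i\cup\{K\}$; I must show $\overline r + k\sum_{j\,:\,d_{ij}<D^k}\tilde y_j \ge k$, i.e. $\sum_{q=1}^K \tilde z^q + k\sum_{j\,:\,d_{ij}<D^k}\tilde y_j \ge k$. The key observation is that the ordering constraints~\eqref{eq:zordered}, $\tilde z^1\ge \tilde z^2\ge\cdots\ge \tilde z^K$, which are present in $(CP_1)$, imply $\sum_{q=1}^K \tilde z^q \ge \sum_{q=1}^k \tilde z^q \ge k\,\tilde z^k$. Combining this with the $(CP_1)$ constraint~\eqref{eq:pcsco3} for this same pair $(i,k)$, namely $\tilde z^k + \sum_{j\,:\,d_{ij}<D^k}\tilde y_j \ge 1$, multiplied through by $k$, gives
\[
\sum_{q=1}^K \tilde z^q + k\sum_{j\,:\,d_{ij}<D^k}\tilde y_j \;\ge\; k\,\tilde z^k + k\sum_{j\,:\,d_{ij}<D^k}\tilde y_j \;=\; k\Bigl(\tilde z^k + \sum_{j\,:\,d_{ij}<D^k}\tilde y_j\Bigr) \;\ge\; k,
\]
which is exactly~\eqref{eq:pcr3}. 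Since $(CP_2)$ has the same constraint index set $S_i\cup\{K\}$ as $(CP_1)$, every constraint of $\overline{CP_2}$ is covered, so $(\overline y,\overline r)$ is feasible for $\overline{CP_2}$ with the same objective value, and therefore $v(\overline{CP_2}) \le \sum_{q=1}^K\tilde z^q$; minimizing over all feasible $(\tilde y,\tilde z)$ yields $v(\overline{CP_2})\le v(\overline{CP_1})$.

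The main subtlety — the one place where Assumption~\ref{ass:rank} is genuinely used — is matching the two objective functions: without the normalization $D^k - D^{k-1}=1$ and $D^0=0$, the $(CP_1)$ objective $D^0 + \sum_k(D^k-D^{k-1})z^k$ is a weighted sum of the $z^k$, not $\sum_k z^k$, and the identification $\overline r = \sum_k \tilde z^k$ would not make the objectives agree; one would then need a more delicate rounding argument. Under the assumption this is clean. A secondary point worth stating explicitly is that the integrality of $r$ in $(CP_2)$ is relaxed to $0\le r\le K$ in $\overline{CP_2}$, so $\overline r = \sum_q \tilde z^q \in [0,K]$ is admissible with no rounding needed. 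I do not anticipate any real obstacle beyond being careful that the constraint sets $S_i\cup\{K\}$ coincide between the two formulations, which they do by definition.
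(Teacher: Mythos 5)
Your proposal is correct and follows essentially the same route as the paper: set $\overline y=\tilde y$, $\overline r=\sum_{q=1}^K\tilde z^q$, use the ordering constraints~\eqref{eq:zordered} (together with $\tilde z^q\geq 0$) to get $\overline r\geq k\tilde z^k$, and combine with~\eqref{eq:pcsco3} scaled by $k$ to verify~\eqref{eq:pcr3}. Your write-up is in fact more explicit than the paper's about the final chain of inequalities and about where Assumption~\ref{ass:rank} is used to match the two objectives.
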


\begin{proof}
  Let $(\tilde y, \tilde z)$ be a solution of 
  $\overline{CP_1}$. We 
build a solution $(\overline y,  \overline r)$ of $\overline{CP_2}$ with
the same value. We take $\overline y =
  \tilde   y$    and   $\overline
  r=\sum_{k=    1}^K   \tilde   z^k$.

We need to
  prove that constraints~\eqref{eq:pcr3} are
  satisfied. 

Since  the $z^k$  variables  are  ordered in  decreasing order  by
Constraints~\ref{eq:zordered},  it   follows  that   $\overline  r\geq
k\tilde z^k$ $\forall k\in\K$.  This and
  Constraints~\eqref{eq:pcsc3}           imply          that
  Constraints~\eqref{eq:pcr3} are satisfied.
\end{proof}

Table~\ref{tab:size} summarizes the size of the previously mentioned formulations.

\begin{table}[h!]
  \centering
  \begin{tabular}{c@{\hspace{0.3cm}}c@{\hspace{0.3cm}}c}
    \hline \textbf{Formulation}& \textbf{\#  of variables}& \textbf{\#
                                                           of
                                                            constraints}\\\hline
$\fold$ & $\mathcal O(NM)$ & $\mathcal O(NM)$\\

\fa, $(P_3)$, $(P_4)$& $\mathcal O(M + K)$ & $\mathcal O(NK)$\\

\fb & $\mathcal O(M + K)$ & $\mathcal O(\min(NM,NK))$\\

\fc & $\mathcal O(M)$ & $\mathcal O(\min(NM,NK))$\\\hline
  \end{tabular}

  \caption{Size of the four formulations ($K\leq NM$).}
  \label{tab:size}
\end{table}


\section{A two-step resolution algorithm}
\label{sec:iterative}

We present, in this section, a two-step algorithm to solve more
efficiently the
$p$-center problem.

Let $lb$ be a lower bound of  the optimal radius. We suppose that $lb$
is one of the distances $D^k$ since, otherwise, $lb$ can be set to the next distance. All the distances $d_{ij}$ lower than
$lb$ can be replaced by $lb$. 

Similarly, all the distances $d_{ij}$ greater than an upper bound $ub$
can  be replaced  by $ub+1$  in order  to discard  solutions of  value
greater than $ub$. 

The size  of Formulations  \fa~and \fb~strongly  depends on  $K$. This
value  can be  reduced by  identifying lower  and upper  bounds.  Such
bounds can  easily be obtained,  as mentioned
in~\cite{elloumi2004new}.  \bigskip

Our resolution  algorithm, depicted  in Figure~\ref{fig:algo},  can be
applied to  any formulation  $F$ of  the $p$-center  problem including
$(P_1)$, $(P_2)$, $(P_3)$, $(P_4)$, $(CP_1)$ and $(CP_2)$. It is mainly
based
on the idea that whenever the optimal value $\overline v$ of the linear
relaxation of~$F$ is
not equal to an existing
distance, then there exists $k\in K$ such that $D^{k-1}<\overline v <
D^k$.  In that  case, $D^k$  constitutes a  stronger lower  bound than
$\overline v$ and the linear 
relaxation can be solved again.  This process is repeated until an existing
 distance   is  obtained   as  the   optimal  value   of  the   linear
 relaxation. This constitutes Step~1 of the algorithm.

The bound  obtained when applying this algorithm
over~\fa~or \fb~corresponds  to the  one called  $LB^*$, computed  by a
binary search algorithm in~\cite{elloumi2004new}.

Step 1 can be further improved by introducing the notion
of \textit{dominated clients} and \textit{dominated facilities} within
some reduction rules.  A facility $F_a$ is dominated
if  there  exists another  facility  $F_b$ such  that  $d_{ia}\geq
d_{ib}$ for all clients $i$. Such a facility can be removed as it
will always be at least as interesting to assign a client to
 $F_b$ than to $F_a$.  Similarly, a
client  $C_a$ is  said to  be dominated  if there  exists another
client   $C_b$  such   that  $d_{aj}\leq   d_{bj}$  for   all  facilities
$j$. Dominated clients can also be ignored.

Instructions~3 and~4 are repeated since new dominated
clients and facilities may be found when a bound is improved,
and vice versa.  

Step  2  of  Algorithm  1   consists  in  solving  Formulation~$F$  to
optimality with the improved bounds $lb$ and $ub$ computed in Step~1.

\begin{figure}[h!]
  \centering
  \begin{algorithm}[H]
  \SetAlgoVlined
  \DontPrintSemicolon

  \KwData{\\$F$: formulation of the $p$-center problem\\$p$: maximal number of centers\\ $d$: distances\\$lb$,
    $ub$: initial bounds} 
  \KwResult{The optimal radius\\} 

\nonl// Step 1\;
  \Repeat{$\overline v=lb$ // until $\overline v$ is one of the existing distances}{

    \Repeat{$lb$ and $ub$ are not improved and no more dominated
    clients or facilities have been found}
    {
      Remove dominated clients and facilities // Reduction rules\;
      $(lb, ub)\leftarrow$ Compute bounds\;
    }
    $\overline v\leftarrow$ SolveLinearRelaxation($F$, $lb$, $ub$)\;


     $lb\leftarrow \min_k \s{D^k~:~\overline v\leq D^k}$\;

  }
\nonl // Step 2\;
  $r^*\leftarrow $ SolveOptimally($F$, $lb$, $ub$)\;
 \Return{$r^*$}\;
\end{algorithm}

  \caption{Algorithm  used to  solve the  $p$-center problem
    through F, a $p$-center formulation.}
  \label{fig:algo}
\end{figure}

\section{Numerical results}
\label{sec:results}
We implement Formulations $(P_1)$, $(P_2)$, $(CP_1)$
and $(CP_2)$ as well as Algorithm 1 on an Intel XEON E3-1280 with 3,5 GHz and
32Go of RAM with the Java API of CPLEX 12.7. Following
several authors, we consider instances from the OR-Library~\cite{beasley1990or}.

\subsection{Comparing sizes and computation times on $5$ instances}

Table~\ref{tab:size2} presents a  comparison of the sizes  of the four
formulations  on  the   five  first  instances  of   the  OR-Library  with
$N=M=100$.  We use the initial lower bound
$LB_0=\max_{i\in\N}\min_{j\in\M}  d_{ij}$  and   initial  upper  bound
$UB_0=\min_{j\in\M}\max_{i\in\N}           d_{ij}$          introduced
in~\cite{elloumi2004new}.

As expected, the number of variables in $(CP_1)$ and $(P_2)$ are equal
and are significantly  lower than in $(P_1)$.  Formulation $(P_2)$ has
more constraints than Formulation $(P_1)$.  Formulation $(CP_1)$ has
by far less constraints than  $(P_2)$. All
this explains why $(CP_1)$ has the best performances in every aspect.

Formulation $(CP_2)$ is the most compact but this does not fully compensate
the poor quality of its LP bound.

\begin{table}\centering
  \begin{small}
    \begin{tabular}{l@{\hspace{0.1cm}}|@{\hspace{0.2cm}}lR{1.3cm}*{6}{R{1.5cm}}}
      \hline
      &    \textbf{}   &    {$\mathbf{(P_1)}$}   &    {$\mathbf{(P_2)}$}   &
                                                                             {$\mathbf{(CP_1)}$} & {$\mathbf{(CP_2)}$}\\\hline

{\textbf{Instance 1}}       & {number of variables} & 10101 & 286 & 286 & {101}\\
& {number of constraints} &12209 & 18602 & 6089 &{5903}\\
{($LB_0 = 0$)}  & LP bound & 97,57 & 106,54 & 106,54 & 83,62\\
{($UB_0 = 186$)}& {resolution time (s)} &9,14 & 251,28 & \textbf{3,16} & 14,94\\\hline

{\textbf{Instance 2}} & {number of variables} & 10101 & 277 & 277 & {101}\\
     & {number of constraints} &    12473 & 17702 & 6094 & {5917}\\
($LB_0 = 0$) &LP bound & 76,72 & 85,68 & 85,68 & 70,19\\
($UB_0 = 178$)    & {resolution time (s)} & 15,69 & 47,31 & \textbf{2,99} & 19,80\\\hline

{\textbf{Instance 3}} & {number of variables} & 10101 & 305 & 305 & {101}\\
     & {number of constraints} &	11293 & 20502 & 6852 & {6647}\\
($LB_0 = 0$) & LP bound & 73,24 & 83,28 & 83,28 & 68,92\\
($UB_0 = 205$)    & {resolution time (s)} & 11,68 & 21,02 & \textbf{2,85} & 10,99\\\hline

{\textbf{Instance 4}} & {number of variables} & 10101 & 299 & 299 & {101}\\
     & {number of constraints} &	12009 & 19902 & 6403 & {6204}\\
($LB_0 = 0$) & LP bound & 54,55 & 64,16 & 64,16 & 52,42\\
($UB_0 = 204$)   & {resolution time (s)} & 3,19 & 43,02 & \textbf{1,64} & 12,90\\\hline

{\textbf{Instance 5}} & {number of variables} & 10101 & 270 & 270 & {101}\\
     &{number of constraints} &	11777 & 17002 & 6263 & {6093}\\
($LB_0 = 0$) & LP bound & 30,37 & 37,82 & 37,82 & 29,29\\
($UB_0 = 169$)   & {resolution time (s)} & 1,93 & 25,10 & \textbf{1,66} & 11,65\\\hline
    \end{tabular}
  
    \caption{Size and resolution times (1 thread) of the formulations for the
      five   first    OR-Library   instances   with    $lb=LB_0$   and
      $ub=UB_0$.}
    \label{tab:size2}
  \end{small}

\end{table}

\subsection{Relaxation and computation times on the $40$ OR-Library instances}

In Table~\ref{tab:res1}, we perform a larger comparison with  stronger
bounds $lb$ and $ub$ equal to the bounds  $LB_1$ and $UB_1$ introduced in~\cite{elloumi2004new}. The resolution is then
performed by CPLEX with its default  parameters but with a maximal CPU
time of $1$ hour.

The first  column is  the instance number.  The three  following columns
provide $N$,  $p$ and  the optimal  value of  the instances  ($N=M$ in
these instances). Columns 5 and 6 contain the initial bounds $LB$ and
$UB$. For each formulation, column ``b'' corresponds the optimal value of
    the linear relaxation  and column ``t'' to the  resolution time in
    seconds. 

We can first observe that  Formulations $(CP_1)$ and $(P_2)$ solve all
the $40$ instances within $1$ hour  while ten instances are not solved
 with $(P_1)$ and one  instance is not solved with $(CP_2)$. We can
 even observe that $(CP_1)$ solves the whole set of  instances in less than $50$
 minutes and $(P_2)$ in less than $85$ minutes. 

Formulation $(P_2)$ outperforms $(CP_1)$  mainly on instances $36$ and
$39$. This is possibly due to some difficulty of the solver to
find good feasible solutions.


\renewcommand{\arraystretch}{1.2}

\begin{table}
  \centering
  \begin{tabular}{r@{\hspace{0.1cm}}|@{\hspace{0.1cm}}rr@{\hspace{0.1cm}}|@{\hspace{0.1cm}}r@{\hspace{0.1cm}}|*{2}{r}@{\hspace{0.1cm}}*{4}{|R{0.6cm}R{1cm}@{\hspace{0.1cm}}}}
    &  \multirow{2}{*}{\textbf{N}}   &   \multirow{2}{*}{\textbf{p}}  &   \multirow{2}{*}{\textbf{opt}}   &   \multirow{2}{*}{\textbf{$\mathbf{lb}$}}  &
                                                              \multirow{2}{*}{\textbf{$\mathbf{ub}$}}
    & \multicolumn{2}{M{2cm}|}{$\mathbf{(P_1)}$} & \multicolumn{2}{M{2cm}|}{$\mathbf{(P_2)}$}  & \multicolumn{2}{M{2cm}|}{$\mathbf{(CP_1)}$} & \multicolumn{2}{M{2cm}}{$\mathbf{(CP_2)}$}\\
    & & & & & & \textbf{b} & \textbf{t} & \textbf{b} & \textbf{t} & \textbf{b} & \textbf{t}  & \textbf{b} & \textbf{t}\\\hline

1 & 100 & 5 & 127 & 59 & 133 & 98 & 2,4 & 107 & 75,3 & 107 & \textbf{1,0} & 85 & 4,0\\
2 & 100 & 10 & 98 & 56 & 117 & 77 & 2,9 & 86 & 7,3 & 86 & \textbf{0,5} & 71 & 5,2\\
3 & 100 & 10 & 93 & 55 & 116 & 74 & 2,9 & 84 & 2,5 & 84 & \textbf{0,2} & 69 & 3,1\\
4 & 100 & 20 & 74 & 41 & 127 & 55 & 0,7 & 65 & 7,9 & 65 & \textbf{0,6} & 53 & 3,4\\
5 & 100 & 33 & 48 & 23 & 87 & 31 & 0,8 & 38 & 1,0 & 38 & \textbf{0,1} & 30 & 1,5\\\hline
6 & 200 & 5 & 84 & 38 & 94 & 68 & 35,9 & 75 & 106,7 & 75 & \textbf{2,7} & 59 & 47,1\\
7 & 200 & 10 & 64 & 34 & 79 & 51 & 20,5 & 58 & 100,2 & 58 & \textbf{1,8} & 46 & 26,1\\
8 & 200 & 20 & 55 & 30 & 72 & 41 & 20,7 & 48 & 87,2 & 48 & \textbf{1,6} & 38 & 19,6\\
9 & 200 & 40 & 37 & 22 & 73 & 28 & 8,9 & 33 & 14,9 & 33 & \textbf{1,4} & 27 & 29,8\\
10 & 200 & 67 & 20 & 11 & 44 & 15 & 1,6 & 18 & 0,8 & 18 & \textbf{0,3} & 14 & 5,5\\\hline
11 & 300 & 5 & 59 & 34 & 67 & 50 & 99,0 & 54 & 30,4 & 54 & \textbf{6,2} & 44 & 68,1\\
12 & 300 & 10 & 51 & 30 & 72 & 43 & 229,7 & 48 & 71,0 & 48 & \textbf{7,2} & 39 & 98,7\\
13 & 300 & 30 & 36 & 20 & 56 & 28 & 114,0 & 33 & 44,6 & 33 & \textbf{4,7} & 26 & 106,9\\
14 & 300 & 60 & 26 & 14 & 60 & 19 & 157,1 & 23 & 33,4 & 23 & \textbf{12,9} & 18 & 151,7\\
15 & 300 & 100 & 18 & 10 & 42 & 13 & 8,6 & 16 & 9,4 & 16 & \textbf{0,9} & 13 & 30,2\\\hline
16 & 400 & 5 & 47 & 26 & 51 & 41 & 403,2 & 45 & 25,3 & 45 & \textbf{3,3} & 36 & 54,5\\
17 & 400 & 10 & 39 & 21 & 47 & 33 & 737,8 & 36 & 35,0 & 36 & \textbf{24,9} & 29 & 149,2\\
18 & 400 & 40 & 28 & 16 & 50 & 22 & 664,7 & 25 & 96,4 & 25 & \textbf{22,1} & 20 & 431,4\\
19 & 400 & 80 & 18 & 10 & 40 & 14 & 226,2 & 16 & 81,4 & 16 & \textbf{18,5} & 13 & 116,9\\
20 & 400 & 133 & 13 & 7 & 32 & 10 & 9,0 & 12 & 3,0 & 12 & \textbf{0,9} & 10 & 22,5\\\hline
21 & 500 & 5 & 40 & 23 & 48 & 35 & 2581,0 & 37 & 118,3 & 37 & \textbf{13,6} & 31 & 194,6\\
22 & 500 & 10 & 38 & 21 & 49 & 31 & - & 35 & 924,4 & 35 & \textbf{24,6} & 28 & 507,8\\
23 & 500 & 50 & 22 & 13 & 38 & 17 & 1375,8 & 20 & 212,2 & 20 & \textbf{38,4} & 16 & 481,8\\
24 & 500 & 100 & 15 & 9 & 35 & 12 & 573,7 & 14 & 51,0 & 14 & \textbf{29,6} & 11 & 209,2\\
25 & 500 & 167 & 11 & 6 & 27 & 8 & 57,2 & 10 & 5,1 & 10 & \textbf{2,0} & 8 & 23,1\\\hline
26 & 600 & 5 & 38 & 21 & 43 & 32 & 3093,6 & 35 & 106,0 & 35 & \textbf{13,6} & 28 & 152,4\\
27 & 600 & 10 & 32 & 18 & 39 & 28 & 3118,9 & 30 & 104,3 & 30 & \textbf{48,3} & 25 & 341,5\\
28 & 600 & 60 & 18 & 10 & 33 & 14 & - & 16 & 176,2 & 16 & \textbf{103,3} & 13 & -\\
29 & 600 & 120 & 13 & 7 & 36 & 10 & - & 12 & 130,7 & 12 & \textbf{77,8} & 9 & 893,6\\
30 & 600 & 200 & 9 & 5 & 29 & 7 & 106,5 & 8 & \textbf{12,4} & 8 & 15,7 & 7 & 89,8\\\hline
31 & 700 & 5 & 30 & 16 & 34 & 27 & 1793,8 & 28 & 68,8 & 28 & \textbf{12,5} & 24 & 139,9\\
32 & 700 & 10 & 29 & 16 & 35 & 25 & - & 27 & 718,7 & 27 & \textbf{127,3} & 22 & 944,5\\
33 & 700 & 70 & 15 & 9 & 26 & 13 & - & 14 & 155,1 & 14 & \textbf{76,0} & 12 & 890,1\\
34 & 700 & 140 & 11 & 6 & 30 & 9 & 2617,9 & 10 & 168,7 & 10 & \textbf{32,8} & 8 & 464,9\\
35 & 800 & 5 & 30 & 16 & 32 & 27 & - & 29 & 23,0 & 29 & \textbf{13,0} & 23 & 170,6\\\hline
36 & 800 & 10 & 27 & 16 & 34 & 24 & - & 26 & \textbf{130,3} & 26 & 821,7 & 21 & 1056,6\\
37 & 800 & 80 & 15 & 8 & 26 & 12 & - & 14 & 222,5 & 14 & \textbf{90,9} & 11 & 1706,9\\
38 & 900 & 5 & 29 & 15 & 35 & 25 & - & 27 & 68,8 & 27 & \textbf{19,0} & 21 & 300,1\\
39 & 900 & 10 & 23 & 13 & 28 & 20 & - & 22 & \textbf{348,4} & 22 & 1190,0 & 18 & 1786,4\\
40  & 900  &  90 &  13  & 7  &  22 &  10  & -  &  12 &  551,0  & 12  &
                                                         \textbf{129,5} & 10 & 1059,9\\\hline
&& & & & \textbf{Total} & & 57699 & & 5129 & & \textbf{2991} & & 16390
  \end{tabular}
  \caption{Comparison of  the different formulations with  $lb = LB_1$
    and $ub=UB_1$.  For each
    instance, the smallest time appears in bold. Symbol ``-'' means that the instance was not solved within
    $1$ hour.}
  \label{tab:res1}
\end{table}

\subsection{Results of Algorithm 1}

Table~\ref{tab:res2} presents the results of Algorithm 1
with formulations $(CP_1)$ and $(CP_2)$.  
    Columns  ``t1''   and   ``t2''
    respectively correspond to the
    time of the first phase and the total time.

Formulation $(CP_2)$ is now able to  solve all the instances within $1$
hour. We observe that the total time to solve the $40$ instances is reduced
by approximately $6$ times for $(CP_1)$ and $14$ times for $(CP_2)$ if
compared to Table~\ref{tab:res1}. 

\begin{table}
  \centering
  \begin{tabular}{*{3}{r}@{\hspace{0.1cm}}|@{\hspace{0.1cm}}
r@{\hspace{0.1cm}}|@{}H@{}H@{}
@{}H@{}H@{}
@{\hspace{0.1cm}}R{1cm}R{1cm}@{\hspace{0.1cm}}|
R{1cm}R{1cm}
}
    &  \multirow{2}{*}{\textbf{N}}   &   \multirow{2}{*}{\textbf{p}}  &   \multirow{2}{*}{\textbf{opt}}   &   \multirow{2}{*}{\textbf{lb}}  &
\multirow{2}{*}{\textbf{ub}}
 &
\multirow{2}{*}{\textbf{LB2}}  &
\multirow{2}{*}{\textbf{UB2}}
  & \multicolumn{2}{M{2cm}|}{$\quad\mathbf{(CP_1)}$} & \multicolumn{2}{M{2cm}}{$\quad\mathbf{(CP_2)}$}\\
    & & & & & & & & \textbf{t1} & \textbf{t2} & \textbf{t1} & \textbf{t2} \\\hline
1 & 100 & 5 & 127 & 59 & 133 & 121 & 133 & 0,2 & \textbf{0,3} & 0,3 & 0,7\\
2 & 100 & 10 & 98 & 56 & 117 & 98 & 117 & 0,2 & \textbf{0,2} & 0,3 & 0,4\\
3 & 100 & 10 & 93 & 55 & 116 & 93 & 116 & 0,2 & \textbf{0,3} & 0,3 & 0,4\\
4 & 100 & 20 & 74 & 41 & 127 & 74 & 127 & 0,3 & \textbf{0,4} & 0,4 & 0,5\\
5 & 100 & 33 & 48 & 23 & 87 & 48 & 87 & 0,1 & \textbf{0,2} & 0,3 & 0,4\\\hline
6 & 200 & 5 & 84 & 38 & 94 & 83 & 94 & 1,9 & \textbf{2,7} & 5,2 & 6,3\\
7 & 200 & 10 & 64 & 34 & 79 & 64 & 79 & 1,1 & \textbf{1,4} & 3,0 & 3,4\\
8 & 200 & 20 & 55 & 30 & 72 & 55 & 72 & 0,8 & \textbf{1,0} & 2,8 & 3,0\\
9 & 200 & 40 & 37 & 22 & 73 & 37 & 73 & 2,0 & \textbf{2,7} & 4,5 & 5,4\\
10 & 200 & 67 & 20 & 11 & 44 & 20 & 44 & 0,4 & \textbf{0,6} & 0,9 & 1,1\\\hline
11 & 300 & 5 & 59 & 34 & 67 & 59 & 61 & 0,8 & \textbf{0,9} & 2,2 & 2,2\\
12 & 300 & 10 & 51 & 30 & 72 & 51 & 72 & 3,4 & \textbf{4,6} & 10,2 & 12,5\\
13 & 300 & 30 & 36 & 20 & 56 & 36 & 56 & 3,6 & \textbf{4,6} & 8,8 & 9,8\\
14 & 300 & 60 & 26 & 14 & 60 & 26 & 60 & 3,5 & \textbf{4,5} & 14,8 & 17,5\\
15 & 300 & 100 & 18 & 10 & 42 & 18 & 42 & 1,5 & \textbf{2,1} & 3,3 & 3,7\\\hline
16 & 400 & 5 & 47 & 26 & 51 & 47 & 51 & 1,4 & \textbf{1,4} & 6,4 & 6,4\\
17 & 400 & 10 & 39 & 21 & 47 & 39 & 47 & 3,3 & \textbf{4,3} & 9,5 & 10,6\\
18 & 400 & 40 & 28 & 16 & 50 & 28 & 50 & 5,8 & \textbf{8,3} & 29,1 & 33,3\\
19 & 400 & 80 & 18 & 10 & 40 & 18 & 40 & 4,1 & \textbf{6,2} & 9,8 & 12,1\\
20 & 400 & 133 & 13 & 7 & 32 & 13 & 32 & 2,5 & \textbf{3,0} & 4,0 & 5,0\\\hline
21 & 500 & 5 & 40 & 23 & 48 & 40 & 48 & 3,1 & \textbf{4,0} & 9,7 & 10,3\\
22 & 500 & 10 & 38 & 21 & 49 & 38 & 49 & 16,6 & \textbf{26,5} & 38,6 & 48,3\\
23 & 500 & 50 & 22 & 13 & 38 & 22 & 38 & 7,0 & \textbf{9,9} & 31,5 & 37,1\\
24 & 500 & 100 & 15 & 9 & 35 & 15 & 35 & 7,6 & \textbf{11,4} & 18,5 & 23,7\\
25 & 500 & 167 & 11 & 6 & 27 & 11 & 27 & 3,7 & \textbf{4,6} & 7,5 & 9,0\\\hline
26 & 600 & 5 & 38 & 21 & 43 & 37 & 43 & 4,6 & \textbf{5,3} & 19,3 & 20,7\\
27 & 600 & 10 & 32 & 18 & 39 & 32 & 39 & 9,5 & \textbf{12,5} & 23,0 & 26,2\\
28 & 600 & 60 & 18 & 10 & 33 & 18 & 33 & 14,4 & \textbf{17,5} & 42,0 & 48,7\\
29 & 600 & 120 & 13 & 7 & 36 & 13 & 36 & 23,4 & \textbf{32,7} & 91,0 & 111,4\\
30 & 600 & 200 & 9 & 5 & 29 & 9 & 29 & 10,5 & \textbf{15,1} & 17,4 & 21,9\\\hline
31 & 700 & 5 & 30 & 16 & 34 & 30 & 34 & 8,2 & \textbf{9,3} & 15,8 & 17,5\\
32 & 700 & 10 & 29 & 16 & 35 & 28 & 35 & 18,8 & \textbf{71,8} & 33,8 & 109,8\\
33 & 700 & 70 & 15 & 9 & 26 & 15 & 26 & 10,2 & \textbf{14,3} & 25,4 & 34,4\\
34 & 700 & 140 & 11 & 6 & 30 & 11 & 30 & 34,2 & \textbf{46,4} & 90,1 & 107,6\\
35 & 800 & 5 & 30 & 16 & 32 & 30 & 32 & 2,2 & \textbf{2,2} & 11,8 & 12,0\\\hline
36 & 800 & 10 & 27 & 16 & 34 & 27 & 34 & 20,0 & \textbf{30,3} & 40,5 & 53,1\\
37 & 800 & 80 & 15 & 8 & 26 & 15 & 26 & 21,8 & \textbf{27,8} & 50,2 & 60,9\\
38 & 900 & 5 & 29 & 15 & 35 & 29 & 35 & 12,2 & \textbf{12,7} & 29,7 & 30,3\\
39 & 900 & 10 & 23 & 13 & 28 & 23 & 28 & 36,6 & \textbf{49,7} & 45,5 & 153,4\\
40 & 900 & 90 & 13 & 7 & 22 & 13 & 22 & 21,8 & \textbf{31,2} & 50,3 & 70,7\\\hline
\multicolumn{2}{l}{\textbf{Total}}&& & & & & & & \textbf{484} & & 1142
  \end{tabular}
  \caption{Results obtained with Algorithm 1 of
    Figure~\ref{fig:algo} with $lb = LB_1$
    and $ub=UB_1$.}
  \label{tab:res2}
\end{table}

\section{Conclusion}

We introduced two new compact formulations of the $p$-center problem. We
theoretically compared the quality of their LP bounds and their sizes to existing
formulations. Numerical experiments confirmed these results and highlighted
the fact that  our new formulation $(CP_1)$  outperforms the previously
known formulations  $(P_1)$ and $(P_2)$ at all levels. Our more compact formulation $(CP_2)$
suffers from the poor quality of its linear relaxation. Another aspect
of our work was to embed the formulations within a two-step
algorithm in order to obtain better computation times.

Our  future work  will focus  on improving  our
compact formulation through polyhedral studies.


\bibliographystyle{plain}
\bibliography{bibliographie}

\begin{thebibliography}{1}

\bibitem{beasley1990or}
John~E. Beasley.
\newblock Or-library: distributing test problems by electronic mail.
\newblock {\em Journal of the operational research society}, pages 1069--1072,
  1990.

\bibitem{Calik2015}
Hatice Calik, Martine Labb{\'e}, and Hande Yaman.
\newblock {\em p-Center Problems}, pages 79--92.
\newblock Springer International Publishing, Cham, 2015.

\bibitem{calik2013double}
Hatice Calik and Barbaros~C. Tansel.
\newblock Double bound method for solving the p-center location problem.
\newblock {\em Computers \& Operations Research}, 40(12):2991--2999, 2013.

\bibitem{daskin1995network}
Mark~S. Daskin.
\newblock Network and discrete location analysis.
\newblock {\em ed: John Wiley and Sons, New York}, 1995.

\bibitem{elloumi2004new}
Sourour Elloumi, Martine Labb{\'e}, and Yves Pochet.
\newblock A new formulation and resolution method for the p-center problem.
\newblock {\em INFORMS Journal on Computing}, 16(1):84--94, 2004.

\bibitem{ferone2017}
Daniele Ferone, Paola Festa, Antonio Napoletano, and Mauricio G.~C. Resende.
\newblock A new local search for the p-center problem based on the critical
  vertex concept.
\newblock In Roberto Battiti, Dmitri~E. Kvasov, and Yaroslav~D. Sergeyev,
  editors, {\em Learning and Intelligent Optimization}, pages 79--92, Cham,
  2017. Springer International Publishing.

\bibitem{ferone2017b}
Daniele Ferone, Paola Festa, Antonio Napoletano, and Mauricio G.~C. Resende.
\newblock On the fast solution of the p-center problem.
\newblock In {\em 2017 19th International Conference on Transparent Optical
  Networks (ICTON)}, pages 1--4, July 2017.

\bibitem{sayah2017new}
David Sayah and Stefan Irnich.
\newblock A new compact formulation for the discrete p-dispersion problem.
\newblock {\em European Journal of Operational Research}, 256(1):62--67, 2017.

\end{thebibliography}

\end{document}